\def\today{\number\day\space\ifcase\month\or   January\or February\or
   March\or April\or May\or June\or   July\or August\or September\or
   October\or November\or December\fi\   \number\year}
\theoremstyle{definition}
\newtheorem{thm}{Theorem}[section]
\newtheorem{lem}[thm]{Lemma}
\newtheorem{cor}[thm]{Corollary}
\newtheorem{rmk}[thm]{Remark}
\renewcommand{\qed}{\rule{0.4em}{2ex}}
\newcommand{\beq}{\begin{equation}}
\newcommand{\eeq}{\end{equation}}
\newcommand{\beqr}{\begin{eqnarray*}}
\newcommand{\eeqr}{\end{eqnarray*}}
\newcommand{\bal}{\begin{align*}}
\newcommand{\eal}{\end{align*}}
\newcommand{\bei}{\begin{itemize}}
\newcommand{\eei}{\end{itemize}}
\newcommand{\Z}{{\mathbb{Z}}}
\newcommand{\N}{{\mathbb{N}}}
\newcommand{\K}{{\mathbb{K}}}
\newcommand{\tsr}{{\mathrm{tsr}}}
\title[Stable rank for C*-algebras]
{Stable rank for crossed products by actions of finite groups on C*-algebras}
\author{Hiroyuki Osaka}
\date{\today}
\thanks{$^*$Research of the first author partially supported by the JSPS grant for Scientific Research  No.17K05285}
\address{ Department of Mathematical Sciences\\
  Ritsumeikan University\\ Kusatsu, Shiga, 525-8577  Japan}
\email[]{osaka@se.ritsumei.ac.jp}
\keywords{Stable rank one, Jiang-Su algebras, Inclusion of C*-algebras}
\subjclass[2000]{Primary 46L55; Secandary 46L35.}
\begin{document}

\begin{abstract}

Let $G$ be a finite group, $A$ a unital separable finite simple nuclear C*-algebra, and $\alpha$ an action of 
$G$ on $A$.
Assume that $A$ absorbs the Jiang-Su algebra $\mathcal{Z}$,
the extremal boundary of the trace space of $A$ is compact and finite dimensional and that 
$\alpha$ fixes any tracial state of $A$. Then $\tsr(A \rtimes_\alpha G) = 1$.
In particular, when $A$ has a unique tracial state, we conclude it without above conditons on a tracial state space of $A$.

\end{abstract}

\maketitle

\section{Introduction}

Rieffel \cite{Rf83} defined the (topological) stable rank, $\tsr(A)$, of a C*-algebra, which is the noncommutative 
analogue of the complex dimension of topological spaces. That is, for the continuous functions on a compact Hausdorff 
$X$ one has $\tsr(C(X)) = \lfloor\dfrac{1}{2}\dim X\rfloor + 1$, where $\dim X$ is the covering dimension of $X$. 
For a unital C*-algebra $A$ the stable rank $\tsr(A)$ is either $\infty$ or the smallest possible integer $n$ such that each $n$-tuple in $A^n$ 
can be approximated in norm by $n$-tuples $(b_1, \dots, b_n)$ such that $\sum_{i=1}^nb_i^*b_i$ is invertible. 
For a nonunital C*-algebra $A$ we define $\tsr(A) = \tsr(\tilde{A})$, $\tilde{A}$ means the unitaization of $A$.

Rieffel \cite{Rf83} showed that $\tsr(A) = 1$ if and only if $\tsr(M_n(A)) = 1$ for $n \in \N$ and $\tsr(A) = 1$  
if and only if $\tsr(A \otimes \K) = 1$ for the C*-algebra $\K$ of compact operators on a separable infinite dimensional Hilbert space.
Related to crossed products we have in general that $\tsr(A \rtimes_\alpha \Z) \leq \tsr(A) + 1$ by \cite[Theporem~7.1]{Rf83} and 
$\tsr(A \rtimes_\alpha G) \leq \tsr(A) + \mathrm{card}(G) -1$ for any action $\alpha$ from a finite group $G$ on $A$ by \cite[Theorem~2.4]{JOPT}. However, it is not easy to determine when those crossed products  have stable rank one. except crossed products by "strongly" outer actions as (tracial) Rokhlin property \cite{LS15}, \cite{OT17}, \cite{NCP06}.
See \cite{O03}, \cite{AK},  \cite{LB} and their bibliography for basic properties about stable ranks. 

In this note we determine stable rank one for crossed products $A \rtimes_\alpha G$ of any action $\alpha$ from a finite group $G$ on a separable finite  simple nuclear unital C*-algebra $A$,
when $A$ is the Jiang-Su absorbing with some conditions on a tracial state space of $A$, using observations by Sato \cite{sato16} and  by R\o rdam \cite{Rordam:Z-absorbing}. In particular, when $A$ has a unique tracial state, we conclude it without above conditions on a tracial state space of $A$.
Here. the Jiang-Su algebra $\mathcal{Z}$ is a unital separable simple infinite-dimensional nuclear C*-algebra with a unique tracial state whose K-theoretic invariants are same as that of the complex numbers \cite{JS99}.
In the current classification theorem of C*-algebras, the absorption of $\mathcal{Z}$ is regarded as one of the regular properties of classiable C*-algebras. See \cite{EGLN}, \cite{GLN}, \cite{TWW}.

\section{Stable rank for inclusions of unital C*-algebras}

Let $A \subset B$ be an inclusion of unital C*-algebras and 
$E \colon G \rightarrow A$ be a conditional expectation of 
index-finite type  as defined in Definition 1.2.2. of \cite{Watatani:index}.  

The following is a general formula for stable rank for an inclusion of unital C*-algebras of  index-finite type.

\vskip 2mm

\begin{thm}\label{thm:rank}\cite[Theorem~2.1]{JOPT}
$1 \in A \subset B$ of unital C*-algebras,
let $E \colon B \to A$ be
a conditional expectation with index-finite type,
and let $\big( (v_k, v_k^*) \big)_{1 \leq k \leq n}$
be a quasi-basis for $E.$
Then $\tsr (B) \leq \tsr (A) + n - 1.$
\end{thm}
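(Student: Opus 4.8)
The plan is to use the quasi-basis to represent $B$ concretely inside the matrix algebra $M_n(A)$ and then to reduce the asserted bound to Rieffel's computation of the stable rank of matrix amplifications, paying one unit of stable rank for each of the $n-1$ extra coordinates by a perturbation argument in the spirit of \cite{Rf83}. First I would reduce to the case $1_A = 1_B$: the general case follows by passing to unitizations, since $\tsr$ is defined through the unitization and a quasi-basis for $E$ induces one for the unitized expectation. Then, following Watatani \cite{Watatani:index}, I would regard $B$ as a right Hilbert $A$-module with inner product $\langle x,y\rangle = E(x^*y)$; the quasi-basis exhibits this module as finitely generated and projective with generators $v_1,\dots,v_n$, so the C*-basic construction $\langle B, e_A\rangle$ is $\ast$-isomorphic to $pM_n(A)p$ for the projection $p = [E(v_i^*v_j)]_{i,j} \in M_n(A)$, with $B$ sitting inside it and carrying a dual conditional expectation $\widehat E\colon pM_n(A)p \to B$, again of index-finite type and with a quasi-basis of $n$ elements. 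The single identity driving the estimates is $b = \sum_k v_k E(v_k^*b)$ for $b \in B$: it allows one to pass freely between an element of $B$ and its $A$-valued coordinates $E(v_1^*b),\dots,E(v_n^*b)$, and perturbing the coordinates inside $A$ produces a perturbation of the element inside $B$ of controlled size.

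The crux is an inequality of the shape $\tsr(B) \le \tsr(M_n(A)) + (n-1)$; together with Rieffel's formula $\tsr(M_n(A)) = \lceil(\tsr(A)-1)/n\rceil + 1 \le \tsr(A)$ this yields the theorem. To establish such an inequality I would argue directly from Rieffel's criterion: writing $m = \tsr(A)+n-1$, one must show that the set of $(b_1,\dots,b_m) \in B^m$ with $\sum_j b_j^*b_j$ invertible is norm-dense. Given such a tuple and $\ep > 0$, I would replace each $b_j$ by $\sum_k v_k a_{kj}$ with $a_{kj} \in A$ close to $E(v_k^*b_j)$, transferring the problem to one about making the rectangular matrix over $A$ formed by the coordinate vectors generate $pA^n$ as a left module. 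The first $\tsr(A)$ coordinate directions can be brought into general position by invoking the stable rank of $A$ itself (equivalently, of $M_n(A)$); the remaining $n-1$ directions are then corrected one at a time, each correction consuming one of the extra $n-1$ columns, which is the source of the summand $n-1$. The conditional expectations $E$ and $\widehat E$ are used to ensure that every correction can be carried out inside $B$, not merely inside $M_n(A)$.

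The step I expect to be the main obstacle is precisely this last one. Because $B$ is only a non-unital subalgebra of $M_n(A)$ — at the level of the basic construction it is a corner $pM_n(A)p$ — and because passing to subalgebras or to corners can strictly increase the stable rank, the inequality above is not formal and cannot be gotten from a softer argument (stable rank is not a Morita invariant). Making it work requires the quantitative perturbation lemmas for invertible tuples from \cite{Rf83}, carried out relative to the orthogonal decomposition $A^n = pA^n \oplus (1-p)A^n$ supplied by the quasi-basis, together with a careful count of how many coordinate directions must be touched; it is this count that produces the sharp value $n-1$ rather than something larger.
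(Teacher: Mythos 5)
Your overall strategy---coordinatize elements of $B$ through the quasi-basis identity $b=\sum_k v_k E(v_k^*b)$ (equivalently $b=\sum_k E(bv_k)v_k^*$), perturb the resulting rectangular coordinate matrix over $A$ using Rieffel's general-position results, and let the count of extra columns produce the summand $n-1$---is indeed the skeleton of the cited proof of \cite[Theorem~2.1]{JOPT}. But the step you yourself flag as ``the main obstacle,'' namely transferring invertibility of the perturbed coordinate matrix back to a left-invertible tuple \emph{inside} $B$, is precisely the entire content of the argument, and your proposal leaves it unresolved. Moreover, the difficulty you anticipate (corners, failure of Morita invariance of $\tsr$, the decomposition $A^n=pA^n\oplus(1-p)A^n$) is not the one that actually arises: the proof never needs the basic construction $\langle B,e_A\rangle\cong pM_n(A)p$, the projection $p=[E(v_i^*v_j)]$, the dual expectation $\widehat E$, or any estimate of $\tsr(M_n(A))$; in particular the intermediate inequality $\tsr(B)\le\tsr(M_n(A))+(n-1)$ that you pose as the crux is neither proved in your sketch nor used in the actual argument, and one also does not need to keep the perturbed coordinates inside the submodule $pA^n$.

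Here is the mechanism you are missing. Put $m=\tsr(A)$, take $(b_1,\dots,b_{m+n-1})\in B^{m+n-1}$ and $\ep>0$, and form $c=(E(b_jv_k))\in M_{(m+n-1)\times n}(A)$, so that $b_j=\sum_k c_{jk}v_k^*$. Since the number of rows is $m+n-1=\tsr(A)+n-1$, Rieffel's density theorem for tall rectangular matrices (this single citation is where the ``count'' giving $n-1$ lives; it is not an ad hoc column-by-column correction you must redo) provides a left-invertible $c'\in M_{(m+n-1)\times n}(A)$ with $\|c'_{jk}-c_{jk}\|$ small, and $b_j':=\sum_k c'_{jk}v_k^*$ is then close to $b_j$. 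If $d\in M_{n\times(m+n-1)}(A)$ satisfies $dc'=1_n$, set $y_j:=\sum_k v_k d_{kj}\in B$; then $\sum_j y_jb_j'=\sum_{k,l}v_k\bigl(\sum_j d_{kj}c'_{jl}\bigr)v_l^*=\sum_k v_kv_k^*=\mathrm{Index}(E)$, which by Watatani is a central, positive, invertible element of $B$ because $E$ is of index-finite type. Hence $(b_1',\dots,b_{m+n-1}')$ generates $B$ as a left ideal, so $\tsr(B)\le \tsr(A)+n-1$. Without this transfer computation (or some substitute for it), your proposal does not yet constitute a proof; with it, the detour through $pM_n(A)p$ and $\tsr(M_n(A))$ can be discarded entirely.
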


\vskip 2mm

The inclusion $1 \in A \subset B$ of unital C*-algebras of index-finite type is said to have finite depth $k$ 
if the derived tower obtained by iterating the basic construction
$$
A' \cap A \subset A' \cap B \subset A' \cap B_2 \subset A' \cap B_3 \subset \cdots
$$
satisfies $(A' \cap B_k)e_k(A' \cap B_k) = A ' \cap B_{k+1}$, 
where $\{e_k\}_{k \in \N}$ are projections obtained by iterating the basic construction, so that $B_1 = B$, $e_1 = e_A$, 
and $B_{k+1} = C^*(B_k, e_k)$. When $G$ is a finite group and $\alpha$ an action of $G$ on $A$, 
it is well known that an inclusion $1 \in A \subset A \rtimes_\alpha G$ is of depth $2$. (See \cite[Lemma 3.1]{OT06}.) 

In the case of an infinite dimensional  simple unital C*-algebra $A$ with Property (SP), that is , any nonzero heriditary C*-subalgebra of $A$ has nonzero projection,  we have the following estimate.

\vskip 2mm

\begin{thm}\label{thm:simple}\cite[Theorem~3.2]{OT07}
$1 \in A \subset B$ of unital C*-algebras of index-finite type and depth $2$. 
Suppose that $A$ is an infinite dimensional simple C*-algebra with $\tsr(A) = 1$ and Property (SP).
Then $\tsr(B) \leq 2$.
\end{thm}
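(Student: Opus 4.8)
\emph{Proof strategy.} The starting point is Theorem~\ref{thm:rank}: picking a quasi-basis $\big((v_k,v_k^*)\big)_{1\le k\le n}$ for $E\colon B\to A$ already gives $\tsr(B)\le\tsr(A)+n-1=n$, since $\tsr(A)=1$. As $n$ --- essentially the Watatani index --- can be large, the whole task is to use simplicity, infinite dimensionality, Property~(SP) and depth~$2$ to bring this down to~$2$. The plan is in three stages: pass to the basic construction, where the ambient algebra has stable rank one; use Property~(SP) and depth~$2$ to localize to corners carrying expectations of very small index; and then reassemble.

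Stage one. Set $B_2=C^*(B,e_A)$. The quasi-basis relation $\sum_{k=1}^n v_ke_Av_k^*=1$ shows that $e_A$ is a full projection of $B_2$, while $e_AB_2e_A=Ae_A\cong A$; Watatani's structure theorem identifies $B_2$ with a corner $fM_n(A)f$ of $M_n(A)$, where $f=\big(E(v_i^*v_j)\big)_{i,j}$ is a projection. Hence $B_2$ is Morita equivalent to $A$, so $B_2\otimes\K\cong A\otimes\K$; therefore $B_2$ is again simple, infinite-dimensional, has Property~(SP), and $\tsr(B_2)=1$ since stable rank one is preserved by stable isomorphism. Thus $B$ sits as a unital subalgebra of the simple, stable-rank-one algebra $B_2$, with dual conditional expectation $E_1\colon B_2\to B$ of index-finite type. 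The depth-$2$ relation $(A'\cap B_2)e_2(A'\cap B_2)=A'\cap B_3$ makes the derived tower, after stabilization by matrix algebras, periodic with period~$2$; note also that $A'\cap B$ is finite-dimensional because $A$ is simple.

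Stage two is the crux. Using that $A$ --- hence $B_2$ --- is infinite-dimensional, simple and has Property~(SP), I would produce, for each finite $F\subset B$ and each $\varepsilon>0$, a projection $p\in B$ that almost commutes with $F$ and with $e_A$, obtained via Property~(SP) inside a hereditary subalgebra of $B$ determined by the quasi-basis and by $A'\cap B$, and arranged so that the compressed inclusion $pAp\subset pBp$ carries a conditional expectation with a quasi-basis of at most two elements. Then Theorem~\ref{thm:rank} applied to $pAp\subset pBp$ yields $\tsr(pBp)\le\tsr(pAp)+2-1=2$, since $pAp$ is a full corner of the simple stable-rank-one algebra $A$ and so $\tsr(pAp)=1$. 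Stage three patches these corner estimates: as $p$ can be taken almost central for arbitrary prescribed data, Rieffel's description of $\tsr(B)\le2$ via density of the left-invertible pairs $\mathrm{Lg}_2(B)$ in $B^2$ propagates the estimate to all of $B$, giving $\tsr(B)\le2$.

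The main obstacle is precisely the compression claim in stage two: one must show that Property~(SP) together with the rigidity from depth~$2$ genuinely lets the globally large index be traded for an effective index at most~$2$ on large corners of $B$, and that the cutting projection $p$ can be chosen simultaneously index-reducing and almost central for any data. Controlling the finite-dimensional relative commutant $A'\cap B$ supplied by depth~$2$, and arranging this twofold requirement on $p$, is the technical heart; stages one and three are routine basic-construction and stable-rank bookkeeping. Note that the method is not expected to yield $\tsr(B)=1$ --- the compression argument gives only $\tsr(B)\le2$, which is what the applications need.
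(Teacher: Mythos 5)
The paper itself gives no proof of this theorem --- it is imported verbatim from \cite[Theorem~3.2]{OT07} --- so your proposal has to stand on its own, and as it stands it is a plan rather than a proof: you yourself flag stage two as the ``technical heart,'' and that is exactly where the argument is missing. Concretely, two things go wrong there. First, the compression is not even well posed: for ``$pAp\subset pBp$'' to be an inclusion of C*-algebras carrying a compressed conditional expectation, you need $p$ to commute with $A$ on the nose (i.e.\ $p\in A$ or $p\in A'\cap B$); a projection in $B$ that merely \emph{almost} commutes with the data gives neither a subalgebra $pAp$ nor an expectation $pBp\to pAp$, and no perturbation argument for this is offered. Second, and more seriously, there is no mechanism behind the claim that the effective index drops to at most $2$ on large corners. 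If one cuts by projections in $A'\cap B$, note that for $A\subset A\rtimes_\alpha G$ with $\alpha$ pointwise outer one has $A'\cap(A\rtimes_\alpha G)=\C 1$, so no such cut-down can shrink the quasi-basis below $\mathrm{card}(G)$; if one cuts by $p\in A$, the compressed expectation exists but nothing in Property~(SP) or the depth-$2$ relation $(A'\cap B_2)e_2(A'\cap B_2)=A'\cap B_3$ (invoked only as unspecified ``rigidity'') is shown to produce a two-element quasi-basis. Since the entire reduction from the additive bound $\tsr(B)\le n$ of Theorem~\ref{thm:rank} to the bound $2$ rests on this claim, the proposal does not establish the theorem.

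Stage three has a gap of its own: knowing $\tsr(pBp)\le 2$ for projections $p$ that are almost central with respect to finitely many elements does not propagate to $\tsr(B)\le 2$ just by citing density of left-invertible pairs; one must control the complement $1-p$ (typically via a comparison statement such as $1-p\precsim p$, which is where (SP) and simplicity would actually have to work) and then run an explicit exchange/gluing argument in $\mathrm{Lg}_2(B)$. Stage one is correct but standard (Watatani's identification of $B_2=C^*(B,e_A)$ with a corner of $M_n(A)$, hence $\tsr(B_2)=1$), and in your sketch it is never really used afterwards. So the proposal identifies plausible ingredients --- basic construction, (SP), finite-dimensionality of $A'\cap B$, corner estimates --- but the two steps that would constitute the actual proof (index reduction on corners, and the local-to-global passage) are asserted rather than proved, and the first of them is doubtful in the form stated.
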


\vskip 2mm

\begin{rmk}
When $A$ is not simple, the estimate in Theorem~\ref{thm:rank} is the best possible. 
Indeed, in \cite[Example~8.2.1]{B:symmetry} Blackadar constructed a symmetry action $\alpha$ on the CAR $\mathcal{U}$ algebra such that 
$$
(C[0, 1] \otimes \mathcal{U}) \rtimes_{id \otimes \alpha}\Z/2\Z \cong C[0, 1] \otimes B,
$$
where $\tsr(B) = 1$ and $K_1(B) \not= 0$. Hence we know that $\tsr(C[0, 1] \otimes B) = 2$ 
by \cite[Corollary~7.2]{Rf83} and \cite[Proposition~5.2]{NOP}.
\end{rmk}

\vskip 3mm

Let $\mathcal{Z}$ be the Jiang-Su algebra. 
When a C*-algebra $B$ in Theorem~\ref{thm:simple} is Jiang-Su absorption, that is,  $A \otimes \mathcal{Z} \cong A$, we can conclude that $\tsr(B) =1$ as follows.

\vskip 2mm

\begin{thm}\label{thm:inclusion}
$1 \in A \subset B$ of unital C*-algebras of index-finite type. 
Suppose that $A$ is an infinite dimensional  simple C*-algebra with $\tsr(A) < \infty$ 
and $B$ is Jiang-Su absorption.
Then $\tsr(B) = 1$.
\end{thm}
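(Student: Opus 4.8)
The plan is to combine the finiteness estimate of Theorem~\ref{thm:rank} with R\o rdam's structure theory for $\mathcal{Z}$-absorbing C*-algebras \cite{Rordam:Z-absorbing}. Theorem~\ref{thm:rank} already gives $\tsr(B)\le\tsr(A)+n-1<\infty$, where $n$ is the number of elements of a quasi-basis for the given conditional expectation $E\colon B\to A$. Thus $B$ is a unital $\mathcal{Z}$-absorbing C*-algebra of finite stable rank, and the only remaining issue is whether $\tsr(B)=1$. For such algebras R\o rdam's analysis separates the cases $\tsr=1$ and $\tsr\ge 2$ by finiteness: a unital $\mathcal{Z}$-absorbing C*-algebra of finite stable rank has stable rank one exactly when it is stably finite. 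So the whole problem reduces to showing that $B$ is stably finite.

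First I would observe that $A$ is stably finite. Indeed $A$ is simple with $\tsr(A)<\infty$, and a simple unital C*-algebra with finite stable rank is finite (were its unit infinite it would be purely infinite, hence of infinite stable rank); applying this to each $M_k(A)$, which is again simple with $\tsr(M_k(A))<\infty$ by Rieffel's matrix formula \cite{Rf83}, shows that $M_k(A)$ is finite for all $k$, i.e.\ $A$ is stably finite.

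Next I would transfer stable finiteness from $A$ to $B$ through Watatani's basic construction \cite{Watatani:index}. Fixing a quasi-basis $\big((v_k,v_k^*)\big)_{1\le k\le n}$, the space $B$ is a finitely generated projective right Hilbert $A$-module for the inner product $\langle x,y\rangle=E(x^*y)$: the elements $v_k$ generate it, and faithfulness of $E$ makes the inner product definite. Hence the basic-construction algebra $\langle B,e_A\rangle\cong\End_A(B)$ is isomorphic to a corner $pM_n(A)p$ of $M_n(A)$, with $1_B\mapsto p$, and left multiplication gives a unital embedding $B\hookrightarrow pM_n(A)p$; likewise $M_k(B)\hookrightarrow M_k(pM_n(A)p)\cong(1_k\otimes p)M_{kn}(A)(1_k\otimes p)$ for every $k$. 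A corner of a finite C*-algebra is finite and a unital C*-subalgebra of a finite C*-algebra is finite, so every $M_k(B)$ is finite; thus $B$ is stably finite. Combined with $\tsr(B)<\infty$ and $B\otimes\mathcal{Z}\cong B$, R\o rdam's theorem now yields $\tsr(B)=1$.

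The only deep ingredient is R\o rdam's structural result; everything else is bookkeeping around the basic-construction tower together with elementary finiteness arguments. The step that needs the most care is pinning down the exact hypotheses under which ``$\mathcal{Z}$-absorbing plus stably finite implies $\tsr=1$'' applies --- in particular whether exactness is required, which is harmless in the intended application since there $B=A\rtimes_\alpha G$ is nuclear --- and, on the inclusion side, checking that $B$ is genuinely finitely generated and projective over $A$, so that the unital embedding $B\hookrightarrow pM_n(A)p$ is legitimate.
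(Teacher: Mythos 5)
Your reduction of the problem to stable finiteness of $B$ rests on a statement that is not in R\o rdam's paper: you claim that a unital $\mathcal{Z}$-absorbing C*-algebra of finite stable rank has stable rank one exactly when it is stably finite. R\o rdam's dichotomy (purely infinite versus stable rank one) and \cite[Theorem~6.7]{Rordam:Z-absorbing} are proved for \emph{simple} unital $\mathcal{Z}$-absorbing algebras; no non-simple version is established there, and your side worry about exactness is beside the point --- the hypothesis you are silently dropping is simplicity. Since $B$ is not assumed simple (and need not be), your final step is unsupported: for general $\mathcal{Z}$-stable algebras, ``stably finite implies stable rank one'' is not a result you can quote from the cited literature. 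This is precisely where the paper's proof does something you omit: because $A$ is simple and the inclusion is of index-finite type, \cite[Theorem~3.3]{Izumi:inclusion} shows that $B$ decomposes as a finite direct sum $\oplus_i B_i$ of simple unital closed ideals; each $B_i$ has finite stable rank by Theorem~\ref{thm:rank}, hence is finite by Lemma~\ref{lem:infinite}, and R\o rdam's theorem applied to each simple summand gives $\tsr(B_i\otimes\mathcal{Z})=1$, whence $B\cong B\otimes\mathcal{Z}=\oplus_i\,(B_i\otimes\mathcal{Z})$ has $\tsr(B)=\max_i\tsr(B_i\otimes\mathcal{Z})=1$.

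Your intermediate steps, on the other hand, are sound and partly stronger than what the paper uses: the bound $\tsr(B)<\infty$ from Theorem~\ref{thm:rank} is the same first move, and the transfer of stable finiteness from $A$ to $B$ via the quasi-basis (making $B$ a finitely generated projective Hilbert $A$-module, so that $B$ embeds unitally into a corner of $M_n(A)$, and likewise for matrix amplifications) is correct; the paper instead obtains mere finiteness of each simple summand directly from finite stable rank via Cuntz and Rieffel (Lemma~\ref{lem:infinite}), which is all the simple case of R\o rdam's theorem needs. But stable finiteness of $B$ alone does not close the argument: you must either invoke the decomposition of $B$ into finitely many simple summands so that the simple-case theorem applies, or prove a genuinely non-simple version of R\o rdam's theorem, which your proposal does not do.
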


\vskip 2mm

We use the following simple observation  to prove Theorem~\ref{thm:inclusion}

\vskip 2mm 

\begin{lem}\label{lem:infinite}
Let $A$ be a simple unital C*-algebra. 
Then $A$ is finite if $\tsr(A) < \infty$.
\end{lem}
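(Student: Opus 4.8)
The plan is to argue by contradiction. Assume $\tsr(A) = n < \infty$ but that $A$ is not finite. Then $1_A$ is an infinite \pj, so there is an isometry $s \in A$ with $s^* s = 1_A$ and $p := ss^* \neq 1_A$; set $q := 1_A - p \neq 0$. Viewing $A$ as a right module over itself, the map $a \mapsto sa$ is an isomorphism of $A$ onto $pA$, so $A \cong pA \oplus qA \cong A \oplus qA$, with $qA \neq 0$ since $q \in qA$. Only ordinary infiniteness has been used so far.

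The point of simplicity is to promote ``infinite'' to ``properly infinite'': by a theorem of Cuntz, every infinite \pj\ in a simple \ca\ is properly infinite, so $1_A$ is properly infinite. Consequently $A \cong M_k(A)$ for every $k$, equivalently the free module $A^k$ is isomorphic to $A$; in particular $A^n \cong A \cong A \oplus qA \cong A^n \oplus qA$.

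Now I would quote Rieffel's cancellation theorem: since $\tsr(A) \leq n$, the module $A^n$ can be cancelled from direct sums of finitely generated projective modules. Cancelling $A^n$ from $A^n \oplus 0 \cong A^n \oplus qA$ gives $qA \cong 0$, i.e. $q = 0$, contradicting $q \neq 0$. Hence $A$ is finite.

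I do not expect a genuine obstacle here; the step that carries the real content is the passage from ``infinite'' to ``properly infinite'' for the unit, which is precisely what makes the free module $A$ itself large enough ($A \cong A^n$) to be cancellable, and after that one only needs to invoke Rieffel's cancellation theorem in a form that applies. Without simplicity the statement is false --- the Toeplitz algebra has stable rank $2$ and is not finite --- and the argument correctly collapses there, since the Toeplitz unit is infinite but not properly infinite.
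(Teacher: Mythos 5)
Your first step coincides with the paper's: by Cuntz's theorem (the reference [Cu77] used in the paper), an infinite projection in a simple C*-algebra is properly infinite, so $1_A$ dominates, for every $k$, mutually orthogonal projections each Murray--von Neumann equivalent to $1_A$. After that, however, there are two genuine gaps. First, proper infiniteness of the unit does \emph{not} give $A \cong M_k(A)$, i.e.\ $A^k \cong A$ as right $A$-modules; it only gives that $A^k$ is isomorphic to a direct summand of $A$ (that is, $k[1_A] \precsim [1_A]$). A module isomorphism $A^k \cong A$ would force $k[1_A] = [1_A]$ in $K_0(A)$, which fails for simple properly infinite algebras such as $\mathcal{O}_\infty$ (where $[1]$ generates $K_0 \cong \mathbb{Z}$) or $\mathcal{O}_3$. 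Second, the cancellation theorem is misquoted: $\tsr(A) \le n$ does not make the free module $A^n$ cancellable from arbitrary finitely generated projective modules. The Bass--Rieffel cancellation theorem requires the modules being \emph{compared} (in your application, $0$ and $qA$) to contain a free summand of rank at least $n$; cancelling a large free module against small complements is exactly what can fail. Concretely, $A = C(S^5)$ has $\tsr(A) = 3$, and there is a nontrivial rank-two bundle $E$ over $S^5$ with $E \oplus A^3 \cong A^2 \oplus A^3$ but $E \not\cong A^2$.

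These two problems do not cancel each other out: even after repairing the first point to $A \cong A^n \oplus R$, the relation $A \cong A \oplus qA$ only produces isomorphisms $P \oplus Q \cong P' \oplus Q$ in which the modules to be compared ($0$ and $qA$) are small, so no correct form of Bass/Rieffel cancellation yields $qA = 0$. Indeed, $A \cong A \oplus qA$ with $qA \neq 0$ genuinely holds in properly infinite algebras, so the contradiction cannot be a formal consequence of cancellation; what is needed is the specific fact that a unital C*-algebra containing two isometries with orthogonal range projections has infinite stable rank. That is Rieffel's Proposition~6.5, and it is precisely what the paper cites after invoking Cuntz: simplicity plus infiniteness gives orthogonal projections $p,q$ with $1 \sim p \sim q$, and then Proposition~6.5 gives $\tsr(A) = \infty$, contradicting the hypothesis. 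Replacing your cancellation argument by that citation (or by an actual proof of it) closes the gap and essentially reproduces the paper's proof.
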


\begin{proof}
Suppose that $A$ is infinite. Then from \cite{Cu77} there are orthogonal projections $p, q$ such that 
$1 \sim p \sim q$, where $\sim$ means the Murray-von Neumann equivalence.
Hence $tsr(A) = \infty$ by \cite[Proposition~6.5]{Rf83}, and a contradiction.
\end{proof}

{\bf Proof of Theorem~2.4}

Since $A$ is simple, $B$ can be decomposed into finite direct sums $\oplus B_i$ of simple (unital) closed ideals by
\cite[Theorem~3.3]{Izumi:inclusion}.  By Theorem~\ref{thm:rank} we know $\tsr(B) < \infty$, that is, 
$\tsr(B_i) < \infty$ for each $i$. Hence 
each $B_i$ is a finite simple C*-algebra by Lemma~\ref{lem:infinite}. 

Let $\mathcal{Z}$ be the Jiang-Su algebra. Then, since each $B_i$ is a finite simple C*-algebra,  by \cite[Theorem~6.7]{Rordam:Z-absorbing}
each $B_i \otimes \mathcal{Z}$ has stable rank one. 
From the assumption since 
\begin{align*}
B &\cong B \otimes \mathcal{Z}\\
&= \oplus_i B_i \otimes \mathcal{Z}, 
\end{align*}
we conclude that $\tsr(B) = \max_i\{\tsr(B_i \otimes \mathcal{Z})\} = 1$.
\hfill$\qed$

\vskip 2mm

\section{Stable rank for C*-crossed products}

Very recently, Sato \cite{sato16} gives the sufficient condition for the Jiang-Su absorption of crossed products by actions of amenable groups on $\mathcal{Z}$-absorbing C*-algebras.
Using this observation we can prove the stable rank one property for the crossed product $A \rtimes_\alpha G$  by an action $\alpha$ of  a finite group on a $\mathcal{Z}$-absobing C*-algebra $A$ under some condition.



\vskip 2mm

\begin{thm}\label{thm:main theorem}
Let $G$ be a finite group, $A$ a unital separable finite simple nuclear C*-algebra, and $\alpha$ an action of 
$G$ on $A$.
Assume that $A$ absorbs the Jiang-Su algebra $\mathcal{Z}$,
the extremal boundary of the trace space of $A$ is compact and finite dimensional and that 
$\alpha$ fixes any tracial state of $A$. Then $\tsr(A \rtimes_\alpha G) = 1$.
\end{thm}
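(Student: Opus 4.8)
The plan is to exhibit $A \rtimes_\alpha G$ as the large algebra of an inclusion of index-finite type over the simple algebra $A$, and then to apply Theorem~\ref{thm:inclusion}. First recall that for a finite group $G$ acting on a unital C*-algebra $A$ the canonical conditional expectation $E \colon A \rtimes_\alpha G \to A$ sending $\sum_{g \in G} a_g u_g$ to $a_e$ is of index-finite type: the family $\{(u_g, u_g^*)\}_{g \in G}$ of canonical implementing unitaries is a quasi-basis, with $\Index(E) = \mathrm{card}(G)\cdot 1$. Thus $1 \in A \subset A \rtimes_\alpha G$ is an inclusion of unital C*-algebras of index-finite type (indeed of depth $2$ by \cite[Lemma~3.1]{OT06}, although only index-finiteness is needed here).

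Next I would check the remaining hypotheses of Theorem~\ref{thm:inclusion} for this inclusion. The algebra $A$ is simple by assumption, and it is infinite dimensional: since $A \cong A \otimes \mathcal{Z}$ and $\mathcal{Z}$ is infinite dimensional, $A$ cannot be finite dimensional. Moreover $\tsr(A) < \infty$; in fact $A$ is a finite simple $\mathcal{Z}$-absorbing C*-algebra, so $\tsr(A) = 1$ by \cite[Theorem~6.7]{Rordam:Z-absorbing}.

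The crucial point — and the step I expect to be the main obstacle — is to show that $B := A \rtimes_\alpha G$ is Jiang-Su absorbing, i.e. $B \cong B \otimes \mathcal{Z}$; this is exactly where the hypotheses on the tracial state space enter. Since $G$ is finite, hence amenable, $A$ is unital separable simple nuclear and $\mathcal{Z}$-absorbing, the extremal boundary $\partial_e T(A)$ of its trace space is compact and finite dimensional, and $\alpha$ fixes every tracial state of $A$ (so the induced $G$-action on $T(A)$, and a fortiori on $\partial_e T(A)$, is trivial), Sato's $\mathcal{Z}$-stability criterion \cite{sato16} applies and yields that $A \rtimes_\alpha G$ is $\mathcal{Z}$-absorbing. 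Note that $A \rtimes_\alpha G$ is automatically unital, separable and nuclear, being the crossed product of such an algebra by a finite group, and that simplicity of $B$ is \emph{not} needed, since Theorem~\ref{thm:inclusion} only requires the small algebra $A$ to be simple. With all hypotheses of Theorem~\ref{thm:inclusion} verified for $1 \in A \subset A \rtimes_\alpha G$, we conclude $\tsr(A \rtimes_\alpha G) = 1$. Finally, if $A$ has a unique tracial state $\tau$, then $\partial_e T(A) = \{\tau\}$ is trivially compact and zero dimensional and $\alpha$ necessarily fixes $\tau$, so the same argument applies without the extra hypotheses on the trace space.
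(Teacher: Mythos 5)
Your proposal is correct and follows essentially the same route as the paper: verify that $1 \in A \subset A \rtimes_\alpha G$ is of index-finite type, invoke Sato's result to get $\mathcal{Z}$-absorption of the crossed product, use R\o rdam's theorem to see $\tsr(A) = 1 < \infty$, and conclude via Theorem~\ref{thm:inclusion}. The extra details you supply (the explicit quasi-basis, infinite dimensionality of $A$, and the observation that simplicity of the crossed product is not needed) are accurate elaborations of the paper's shorter argument.
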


\vskip 2mm

\begin{proof}
Note that an inclusion $A \subset A \rtimes_\alpha G$ is of a finite-index type. 

By \cite[Theorem~1.1]{sato16} $A \rtimes_\alpha G$ is the Jiang-Su absorbing.  
Since $A$ is a finite simple unital C*-algebra with $A \otimes \mathcal{Z} \cong A$, $\tsr(A) =1$ by
\cite[Theorem~6.7]{Rordam:Z-absorbing}. Hence, by Theorem~\ref{thm:inclusion} 
$\tsr(A \rtimes_\alpha G) = 1$.
\end{proof}

\vskip 2mm

\begin{rmk}
When $A$ is a unital simple C*-algebra with $\tsr(A) = 1$ and Property (SP), then we know that 
$\tsr(A \rtimes_\alpha G) \leq 2$ for any action $\alpha$ of a finite group on $A$ by Theorem~\ref{thm:simple}.
Moreover, if $\alpha$ is "strongly" outer like the tracial Rokhlin property in the sense of N.C.Phillips \cite{NCP06}, then $\tsr(A \rtimes_\alpha G) = 1$.
(For example see \cite[Proposition~4.13]{OT17}.)
\end{rmk}

\vskip 2mm

\begin{cor}
Let $G$ be a finite group, $A$ a unital separable finite simple nuclear $\mathcal{Z}$-absorbing C*-algebra with a unique tracial state, and $\alpha$ an action of $G$ on $A$. Then $\tsr(A \rtimes_\alpha G) = 1$.
\end{cor}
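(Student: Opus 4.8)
The plan is to deduce the corollary from Theorem~\ref{thm:main theorem} by checking that its hypotheses are automatically satisfied when $A$ has a unique tracial state. First I would observe that an action $\alpha$ of any group on $A$ must permute the tracial states of $A$, since each $\tau \circ \alpha_g$ is again a tracial state; but if there is only one tracial state $\tau$, this forces $\tau \circ \alpha_g = \tau$ for every $g \in G$, so $\alpha$ fixes every tracial state of $A$. Second, the trace space of $A$ is then the single point $\{\tau\}$, whose extremal boundary is again $\{\tau\}$: this is trivially compact and zero-dimensional, hence finite dimensional. Thus all the trace-space hypotheses of Theorem~\ref{thm:main theorem} hold for free.

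Next I would note that the remaining hypotheses transfer verbatim: $G$ is finite, $A$ is unital, separable, finite, simple, and nuclear, and $A$ absorbs $\mathcal{Z}$ by assumption. Therefore Theorem~\ref{thm:main theorem} applies directly and yields $\tsr(A \rtimes_\alpha G) = 1$. In fact, since the proof of Theorem~\ref{thm:main theorem} only uses the trace-space conditions through the invocation of \cite[Theorem~1.1]{sato16} to obtain $\mathcal{Z}$-absorption of the crossed product, one could alternatively remark that in the unique-trace case Sato's criterion is known to hold without extra hypotheses; but the cleanest route is simply to cite Theorem~\ref{thm:main theorem}.

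I do not expect a serious obstacle here: the corollary is a genuine specialization, and the only thing to verify carefully is the elementary claim that a unique tracial state is automatically $\alpha$-invariant, together with the triviality of the one-point extremal boundary. The one point worth stating explicitly is that $A$ being finite and $\mathcal{Z}$-absorbing already gives $\tsr(A) = 1$ by \cite[Theorem~6.7]{Rordam:Z-absorbing}, and that the inclusion $A \subset A \rtimes_\alpha G$ is of index-finite type because $G$ is finite, so that Theorem~\ref{thm:inclusion} (via Theorem~\ref{thm:main theorem}) is genuinely available. So the proof amounts to: verify the trace hypotheses are vacuous in the unique-trace setting, then quote Theorem~\ref{thm:main theorem}.
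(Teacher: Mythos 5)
Your proposal is correct and matches the paper's intended argument: the corollary is exactly the specialization of Theorem~\ref{thm:main theorem} to the unique-trace case, where the trace space is a single point (so its extremal boundary is trivially compact and finite dimensional) and the unique tracial state is automatically $\alpha$-invariant. The paper gives no further proof, so your verification of these vacuous hypotheses is precisely what is needed.
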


\vskip 2mm






{\it Acknowledgments}

The author would like to thank Yasuhiko Sato for helpful conversations.



\begin{thebibliography}{99}

\bibitem{AK} R. J. Archbold and E. Kaniuth,
{\it Stable rank and real rank for some classes of group C*-algebras},
Trans. Amer. Math. Soc. 357 (2005), no. 6, 2165--2186.

\bibitem{B:symmetry} B.~Blackadar,
{\it Symmetries of the CAR algebras},
Annals of Math. \textbf{131} (1990), 589 - 623.


\bibitem{LB} L. G. Brown, 
{\it On higher real and stable ranks for CCR C*-algebras},
Trans. Amer. Math. Soc. 27 (2016), 7461--7475.

\bibitem{Cu77} J.~Cuntz,
{\it The structure of multiplication and addition in simple C∗ -algebras.},
Math. Scand.  \textbf{40} (1977), no. 2, 215–-233..


\bibitem{EGLN} G.~A.~Elliott, G.~Gong, H.~Lin, and Z. Niu,
{\it On the classification of simple amenable C*-algebras with finite decomposition rank,II},
arXiv:1507.03437.


\bibitem{GLN} G.~Gong, H.~Lin, and Z. Niu,
{\it Classification of finite simple amenable $\mathcal{Z}$-stable C*-algebras},
arXiv:1501.00135.


\bibitem{Izumi:inclusion} M.\ Izumi,
{\it Inclusions of simple $C^*$-algebras},
J.\ Reine Angew.\ Math. \textbf{ 547} (2002),
97 -- 138.




\bibitem{JOPT} J. A Jeong, H. Osaka, N. C. Phillips and T. Teruya,
{\it Cancellation for inclusions of C*-algebras of finite depth},
Indian Univ. Math. J. 58 (2009), no.4, 1357-1564.


\bibitem{JS99} X. Jiang and H. Su,
{\it On a simple unital projectionless C*-algebra},
Amer. J. Math. 121 (1999), no.2, 359--413.


\bibitem{NOP} M. Nagisa, H. Osaka, and N. C. Phillips,
{\it Ranks of algebras of continuous C*-algebra valued functions},
Canad. J. Math. 53 (2001), no.5, 979--1030.


\bibitem{O03} H. Osaka,
{\it Non-commutative dimension for C*-algebras},
Interdiscip. Inform. Sci. 9 (2003), no. 2, 209--220.

\bibitem{OT06} H. Osaka and T. Teruya,
{\it Topological stable rank of inclusions of unital C*-algebras},
Internat. J. Math. 17 (2006), 19--34.



\bibitem{OT07} H. Osaka and T. Teruya,
{\it Stable rank of depth $2$ inclusions of unital C*-algebras},
C. R. Math. Acad. Sci. Soc. R. Can. 29 (2007), no.1, 28--32.


\bibitem{OT17} H.~Osaka and T.~Teruya,
{\it The Jiang-Su absorption for inclusions of unital C*-algebras},
to appear in Canad. J. Math., arXiv:14047663.

\bibitem{NCP06} N.~C.~Phillips,
{\it The tracial Rokhlin property for actions of finite groups on C ∗  -algebras},
Amer. J. Math. 133 (2011), no. 3, 581-636. 

\bibitem{Rf83} M.~A.\  Rieffel,
{\emph{Dimension and stable rank in the K-theory of C*-algebras}},
Proc.\  London Math.\  Soc.~(3) {\textbf{46}}(1983), 301--333.


\bibitem{Rordam:Z-absorbing} M.\ R\o rdam,
{\it The stable and the real rank of $\mathcal{Z}$-absorbing C*-algebras},
Internat. J. Math. \textbf{10} (2004), 1065--1084.


\bibitem{LS15} L.~Santiago,
{\it Crossed product by actions of finite groups with the Rokhlin property}, 
Internat. J. Math. 26 (2015), no. 7, 1550042, 31pp.


\bibitem{sato16} Y. Sato,
{\it Actions of amenable groups and crossed products of $\mathcal{Z}$-absorbing C*-algebras},
arXiv:1612.08529.


\bibitem{TWW} A.~Tikusis, S.~White, and W.~Winter, 
{\it Quasidiagonality of nuclear C*-algebras},
preprint, arXiv:1509.08318.


\bibitem{Watatani:index} Y.\ Watatani,
{\it Index for $C^*$-subalgebras},
Mem.\ Amer.\ Math.\ Soc.\,
{\bf 424},
Amer.\ Math.\ Soc., \ Providence, R.\ I.,
(1990).












\end{thebibliography}
\end{document}